\newcommand{\nicecolor}{Navy}
\newcommand\A{{\mathbb A}}
\DeclareMathOperator{\Iterates}{\mathcal I}
\DeclareMathAlphabet{\mathpzc}{OT1}{pzc}{m}{it}
\DeclareMathOperator{\End}{End}
\DeclareMathOperator{\Bir}{Bir}
\DeclareMathOperator{\Aut}{Aut}
\DeclareMathOperator{\SL}{SL}
\theoremstyle{plain}
\newtheorem{theorem}{Theorem}[section]
\newtheorem{lemma}[theorem]{Lemma}
\newtheorem{proposition}[theorem]{Proposition}
\newtheorem*{maintheorem}{Theorem}
\newtheorem*{keyproposition}{Key proposition}
\theoremstyle{definition}
\newtheorem{definition}[theorem]{Definition}
\newtheorem{remark}[theorem]{Remark}
\newtheorem{question}[theorem]{Question}
\theoremstyle{definition}
\title{Iterated polynomials are dense}
\author{Pascal Autissier \and Jean-Philippe Furter \and Egor Yasinsky}
\date{}
\newcommand{\CC}{\mathbb C}
\newcommand{\FF}{\mathbf F}
\newcommand{\id}{\mathrm{id}}
\newcommand{\NN}{\mathbb N}
\newcommand{\RR}{\mathbb R}
\newcommand{\kk}{\mathbf{k}}
\renewcommand{\AA}{\mathbb A}
\newcommand{\PP}{\mathbb P}
\newcommand{\ZZ}{\mathbb Z}
\newcommand{\wordmap}{\mathbf w}
\newcommand{\address}[1]{\par\smallskip\noindent\textit{#1}}
\newcommand{\email}[1]{\par\smallskip\noindent\texttt{#1}}
\begin{document}

\maketitle

\begin{abstract}
	For any infinite field $\kk$ and any positive integer 
	$r$, we show constructively that the map sending each polynomial $P\in\kk[x]$ to its $r$-th iterate $P^{\circ r}$ is dominant in various inductive limit topologies on the space of all polynomials.	
\end{abstract}

\tableofcontents

\section{Equations in groups}\label{sec:intro}

\subsection{The word map} 
In what follows, by a \emph{word}
we mean an element $w=w(x_1,\ldots,x_N)$ of the free group~ $\mathcal{F}_N$ on $N$ generators $x_1,\ldots,x_N$, for some $N\geq 1$. Let $G$ be a group. The \emph{word map} on $G$ defined by $w$ is the map 
\[
\wordmap\colon G^N\to G,\ \ (g_1,\ldots,g_N)\mapsto w(g_1,\ldots,g_N).
\]
Assuming that $\wordmap\ne\id$, what can be said about the image of $\wordmap$? Or, in other words, what can be said about the solutions $(g_1,\ldots,g_N)\in G^N$ of the equation 
\begin{equation}\label{eq: group equation}
	w(g_1,\ldots,g_N)=g\ \ \ \tag{$*$}
\end{equation}
when one varies $g\in G$? This problem has a long and remarkable history, and we refer to \cite{Shalev,BandmanGarionKunyavskii,GordeevKunyavskiiPlotkin2018} for some excellent surveys. Clearly, one cannot expect the map $\wordmap$ to be surjective already for power words $x^r$, $r\geq 2$. For example, taking $G=\SL_2(\kk)$, where $\kk=\RR$ or $\kk=\CC$, we easily find matrices which do not admit square roots in $G$, and thus $w=x^2$ induces a non-surjective word map. 

\subsection{Borel's dominance theorem and weakly exponential Lie groups} Still, one can ask: how ``large'' is the image $\wordmap(G^N)$ of a word map? For instance, if the group 
$G$ is endowed with a reasonable topology, is the image of $\wordmap$ dense in $G$? Here are two prototypical results in this direction which inspire our main theorem, presented in Section \ref{sec: main result} below.

A morphism $f\colon X\to Y$ of topological spaces is called \emph{dominant} if its image $f(X)$ is dense in $Y$. When $X$ and $Y$ are algebraic varieties, equipped with the Zariski topology, Chevalley's theorem implies that a morphism $f\colon X\to Y$ is dominant if and only if $f(X)$ contains a non-empty Zariski open subset. The following remarkable theorem is due to A.~Borel. 

\begin{theorem}[{\cite{Borel}}]
		If $\kk$ is a field, $G$ is a connected semisimple linear algebraic $\kk$-group, and $w\ne\id$, then the corresponding word map $\wordmap\colon G^N\to G$ is dominant.
\end{theorem}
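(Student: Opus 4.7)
The plan is to produce a point $p \in G^N$ at which the differential $d\wordmap_p\colon T_p(G^N)\to T_{\wordmap(p)}G$ is surjective. Since $G^N$ is smooth and irreducible, Chevalley's theorem on constructible images will then force $\wordmap(G^N)$ to contain a non-empty Zariski open subset of $G$, which is precisely the desired density.

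Before the differential computation, I would make two reductions. First, base-change to the algebraic closure $\bar\kk$: the Zariski closure of $\wordmap(G^N)$ is cut out by equations over $\kk$, so dominance over $\bar\kk$ descends to dominance over $\kk$. Second, decompose $G$ as an almost direct product $G_1\cdots G_m$ of its minimal connected closed normal subgroups, each almost simple and commuting element-wise with the others. Word maps respect this structure: via the central isogeny $G_1\times\cdots\times G_m\to G$ the map $\wordmap$ becomes a product of word maps on the $G_i^N$, so it suffices to treat the almost-simple case (and passing through the universal cover, the simply connected almost-simple case).

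The heart of the argument is then, for a non-trivial reduced word $w=x_{i_1}^{\epsilon_1}\cdots x_{i_k}^{\epsilon_k}$, to make a clever substitution and compute the differential by hand. I would fix a maximal torus $T\subset G$ and, for each $i\in\{1,\dots,N\}$, set $g_i=s_i u_i$ with $s_i\in T$ and $u_i=\exp(t_i X_{\alpha_i})$ in a root subgroup $U_{\alpha_i}$ for a suitably chosen root $\alpha_i$. Using the identity $\mathrm{Ad}(t)X_\alpha=\alpha(t)X_\alpha$ together with the chain rule, each partial derivative $\partial \wordmap/\partial t_i$, evaluated at $t_i=0$, becomes a sum of twisted root vectors whose coefficients are monomials in the characters $\alpha_{i_j}$ applied to subwords in the $s_l$. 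The target is to choose the $\alpha_i$'s and a generic $(s_l)\in T^N$ so that the resulting tangent vectors span all of $\mathfrak{g}$.

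The main obstacle is precisely this last spanning requirement: one has to leverage the structure of the root system of the almost-simple group and the fact that $w\ne\id$ (so that at least one variable genuinely appears and no full cancellation occurs) to guarantee that the differential is surjective, and one has to avoid the pathologies that arise in small positive characteristic where naive Baker--Campbell--Hausdorff arguments collapse. The standard way to finesse this is to localize to the $\SL_2$-subgroup attached to a single root, reducing the crucial non-vanishing check to an explicit rank-one matrix calculation; this rank-one reduction is, to my mind, the true engine of Borel's argument.
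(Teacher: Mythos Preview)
The paper does not prove this theorem at all: it is quoted verbatim from Borel's paper \cite{Borel} as motivational background, with no argument supplied. There is therefore nothing in the paper to compare your attempt against.

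As a separate remark on your sketch: the reductions (base change to $\bar\kk$, passage to almost-simple factors, and ultimately to a rank-one calculation in an $\SL_2$ attached to a root) are indeed the skeleton of Borel's argument. However, Borel does not proceed by exhibiting a single point where $d\wordmap$ is surjective. Rather, he first proves dominance for $\SL_2$ directly (an explicit two-variable computation showing the word map is non-constant, hence dominant since its image is conjugation-stable), and then uses that the image of $\wordmap$ in $G$ is conjugation-invariant and contains a dense subset of every root $\SL_2$; since these subgroups generate $G$, the closure of the image must be all of $G$. Your framing via a single differential computation and explicit use of $\exp$ would run into genuine trouble in small positive characteristic, which you flag but do not resolve; Borel's conjugation-closure argument sidesteps this entirely.
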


The second result concerns complex Lie groups and power maps $\wordmap\colon g\mapsto g^r$ defined on them. Standard tools from Lie theory, such as the exponential map $\exp\colon\mathfrak{g}\to G$, provide an easy way to extracting $r$-th roots. Specifically, given an element $g\in G$, assume there exists $v\in\mathfrak{g}$ such that $g=\exp(v)$. Then $\exp(v/r)$ is an $r$-th root of $g$.  Unfortunately, the exponential map of a Lie group is not necessarily surjective --- this issue has been the subject of extensive study (see the references in the surveys mentioned above). However, for complex Lie groups the following can be said\footnote{In the semisimple case, this theorem is also due to A. Borel.}.

\begin{theorem}[{\cite[Theorem 2.11]{HofmannMukherjea}}]
	Every complex connected Lie group $G$ is weakly exponential, i.e. the image of the exponential map $\exp\colon\mathfrak{g}\to G$ is dense.
\end{theorem}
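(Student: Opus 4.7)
\medskip

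\noindent\textbf{Proof plan.}
The plan is to exhibit a dense open subset of $G$ that is already contained in $\exp(\mathfrak{g})$; the natural candidate is the set $G_{\mathrm{reg}}$ of \emph{regular} elements of $G$, i.e.\ those $g \in G$ for which the centralizer $Z_G(g)$ has minimum possible dimension, equivalently, for which the multiplicity of $1$ as a generalized eigenvalue of $\mathrm{Ad}(g)$ acting on $\mathfrak{g}$ is minimal. Since this condition is the non-vanishing of a coefficient of the holomorphic function $g\mapsto \det(t\cdot\id-\mathrm{Ad}(g))$ on $G$, the set $G_{\mathrm{reg}}$ is the complement of a proper complex-analytic subvariety of the connected complex manifold $G$, and therefore it is open and dense.

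The core step is to show that every regular element lies in the image of $\exp$. For $g \in G_{\mathrm{reg}}$, the generalized $1$-eigenspace of $\mathrm{Ad}(g)$ on $\mathfrak{g}$ is a Cartan subalgebra $\mathfrak{c}\subseteq \mathfrak{g}$, and the corresponding connected analytic subgroup $C\le G$, namely the identity component $Z_G(g)^0$, is a nilpotent complex Lie subgroup containing $g$. Once this is granted, it remains to invoke the classical fact that the exponential map of a connected nilpotent complex Lie group is surjective: indeed the universal cover $\widetilde{C}$ is a simply connected nilpotent complex Lie group on which $\exp_{\widetilde{C}}\colon \mathfrak{c}\to \widetilde{C}$ is a biholomorphism, by a theorem of Malcev (i.e.\ the global convergence of the Baker--Campbell--Hausdorff series in the nilpotent setting); composing with the covering projection $\widetilde{C}\to C$ then yields the surjectivity of $\exp_C\colon \mathfrak{c}\to C$. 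In particular $g\in C=\exp(\mathfrak{c})\subseteq \exp(\mathfrak{g})$.

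Combining the two steps gives $G_{\mathrm{reg}}\subseteq \exp(\mathfrak{g})$, so $\exp(\mathfrak{g})$ is dense in $G$. The main obstacle, in my view, is the second step: the identification of $Z_G(g)^0$ with the nilpotent analytic subgroup associated to the Cartan subalgebra $\mathfrak{c}$, together with the verification that $g$ genuinely lies in this identity component. In the algebraic setting both facts drop out of Jordan decomposition, but for a general connected complex Lie group---which need not be linear, let alone algebraic---one must appeal instead to the structure theory of Cartan subalgebras of complex Lie algebras and to the correspondence between complex Lie subalgebras of $\mathfrak{g}$ and connected analytic subgroups of $G$.
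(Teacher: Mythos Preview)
The paper does not prove this theorem at all: it is quoted verbatim from \cite[Theorem 2.11]{HofmannMukherjea} as one of two motivating results in the introduction, with no argument supplied. So there is no ``paper's own proof'' to compare your proposal against.

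That said, your plan is the standard one and is essentially what Hofmann and Mukherjea do. A word of caution on the point you yourself flag as the main obstacle: for a general connected Lie group it is \emph{not} automatic that $g\in Z_G(g)^0$, even when $g$ is regular; the passage from ``the generalized $1$-eigenspace of $\mathrm{Ad}(g)$ is a Cartan subalgebra $\mathfrak{c}$'' to ``$g$ lies in the connected subgroup with Lie algebra $\mathfrak{c}$'' requires an honest argument (in the original paper this is handled via the structure theory of Cartan subgroups in the complex-analytic category, and the complexity of $G$ is used here). Your sketch is accurate in identifying this as the crux, but as written it stops short of actually bridging it, so if you intend this as a proof rather than a plan you would still need to supply that step.
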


Hence, for $w=x^r$ and ``typical'' $g$, the equation \eqref{eq: group equation} admits a solution.

\subsection{Endomorphisms of the affine space and some ind-topologies} 
This note was motivated by a question posed in the surveys \cite{GordeevKunyavskiiPlotkin2018,BandmanGarionKunyavskii}, which deal with word equations in simple matrix groups. Let $\kk$ be any field. Noting J.-P. Serre's observation \cite{SerreBourbakiCremona} that the group $\Bir(\PP_\kk^2)$ of birational transformations of the projective plane  resembles simple linear algebraic groups, the authors of these surveys asked whether word maps are dominant for the Cremona group $G=\Bir(\PP_\kk^2)$ and any known ``interesting'' topology on it, see \cite[Problem 7.11]{BandmanGarionKunyavskii} and \cite[Question 3.11]{GordeevKunyavskiiPlotkin2018}. 

We consider the closely related automorphism group of the affine space $\Aut(\AA_\kk^n)$ and, more generally, the monoid $\End(\AA_\kk^n)$ of algebraic endomorphisms. Recall that an endomorphism of $\AA^n_\kk$ is given by
\[
f\colon x=(x_1,\ldots,x_n)\mapsto (f_1(x),\ldots,f_n(x)),
\]
where $f_1,\ldots,f_n\in\kk[x_1,\ldots,x_n]$ are polynomials. To simplify the notation, we often write $(f_1,\ldots,f_n)$ in what follows. The \emph{degree} $\deg f$ of an endomorphism $f=(f_1,\ldots,f_n)$ is defined as $\deg(f)=\max\{\deg(f_1),\ldots,\deg(f_n)\}$. The subset of invertible elements of the monoid $\mathcal{E}=\End(\AA^n_\kk)$ is the group $\Aut(\AA^n_\kk)$ of automorphisms of $\AA^n_\kk$.

For each $d\geq 0$, we identify the set 
\[
\mathcal{E}_{\leq d}=\big \{f\in\End(\AA_\kk^n)\colon \deg f\leq d \big\}
\]
with the vector space $\kk^N$, where $N={\binom{d+n}{n} n}$, in an obvious way. Following I. R. \v{S}afarevich \cite{ShafarevichIndGroups}, we view $\End(\AA_\kk^n)$ as an \emph{ind-monoid}.

\begin{definition}\label{definition: topologies}
	Consider the filtration
	\begin{equation}\label{eq: filtration}
		\mathcal{E}_{\leq 1}\subseteq\mathcal{E}_{\leq 2}\subseteq\ldots\subseteq\mathcal{E}_{\leq d}\subseteq\mathcal{E}_{\leq d+1}\subseteq\ldots,\ \ \mathcal{E}=\bigcup_{d=1}^\infty\mathcal{E}_{\leq d}.
	\end{equation}
	\begin{enumerate}
		\item A set $S\subseteq\mathcal{E}$ is called \emph{Zariski closed} if $S\cap\mathcal{E}_{\leq d}$ is Zariski closed in $\mathcal{E}_{\leq d}$ for all $d\geq 1$. The corresponding topology on $\End(\AA_\kk^n)$ is called the \emph{Zariski ind-topology}.
		\item Let $(\kk,|\cdot|)$ be a valued field with a non-trivial absolute value $|\cdot|$. The map $(x,y)\mapsto |x-y|$ is a metric on $\kk$ which yields a topology on $\kk$ in the usual way. The sets~$\mathcal{E}_{\leq d}$, being identified with a $\kk$-vector space $\kk^{N}$, where $N={\binom{d+n}{n} n}$, can be endowed with a uniform norm 
		$
		\|x_1e_1+\ldots x_Ne_N\|_{\infty}=\max_{1\leqslant i\leqslant N}|x_i|
		$
		for a given choice of basis $\langle e_1,\ldots,e_N\rangle=\mathcal{E}_{\leq d}$. Furthermore, any two such uniform norms with respect to two different bases are equivalent, and hence they define the same topology on $\mathcal{E}_{\leqslant d}$. If $(\kk,|\cdot|)$ is complete, then all norms on $\mathcal{E}_{\leqslant d}$ are equivalent to the uniform norm and therefore define the same topology. In any case, by an abuse of terminology, we call this topology \emph{Euclidean}. A set $S\subseteq\mathcal{E}$ is called \emph{Euclidean closed} if $S\cap\mathcal{E}_{\leq d}$ is closed in this topology on $\mathcal{E}_{\leq d}$ for all $d\geq 1$. The corresponding topology on $\End(\AA_\kk^n)$ is called the \emph{Euclidean ind-topology}. Note that this topology is still Hausdorff.
	\end{enumerate}
\end{definition}

\subsection{Main result}\label{sec: main result} The main result of this note is the following.

\begin{maintheorem}
	Let $\kk$ be a field, $r\geq 1$ be an integer and $w=x^r$ be a power word. Consider the corresponding power map on $\End(\AA_\kk^1)\simeq\kk[x]$, which sends every polynomial to its $r$-th iterate: 
	\[
	\wordmap\colon\kk[x]\to\kk[x],\ \ P\mapsto P^{\circ r}.
	\]
	Then the following holds.
	\begin{enumerate}
		\item If $(\kk,|\cdot|)$ is a valued field with a non-trivial absolute value, then $\wordmap$ is dominant in the Euclidean ind-topology. 
		\item The map $\wordmap$ is dominant in the Zariski ind-topology for any infinite field $\kk$. 
	\end{enumerate}
	Furthermore, the statement stays true when $w$ is replaced with any non-trivial word $w\in\mathcal{M}_N$ in the free monoid $\mathcal{M}_N$ on $N$ generators.
\end{maintheorem}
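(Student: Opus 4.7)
I would first reduce the general monoid-word statement to the power-word case $w=x^r$, then attack the power-word case by an explicit perturbative/algebraic construction of preimages modulo high-degree terms.

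\textbf{Reduction to power words.} Given a non-trivial word $w=x_{i_1}\cdots x_{i_k}\in\mathcal{M}_N$, pick a variable $x_{i_0}$ appearing in $w$ and let $r\geq 1$ be its multiplicity. Specializing $P_j=\id$ for all $j\neq i_0$ turns the word map into $P\mapsto P^{\circ r}$. Since the image of a restriction is contained in the image of the full map, dominance of the power-word map implies dominance of $\wordmap$. The case $r=1$ is the identity, so we may assume $r\geq 2$.

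\textbf{Core construction for $P\mapsto P^{\circ r}$.} Fix a target $Q\in\kk[x]$ of degree $e$. Reducing by a preliminary affine change of variable if needed, I assume $Q(0)=0$. Write $P(x)=a_1 x+a_2 x^2+\ldots+a_d x^d$ (with $a_0=0$ fixed, so $P^{\circ r}(0)=0$ automatically); then the coefficient $c_k$ of $x^k$ in $P^{\circ r}$ is a polynomial in $a_1,\ldots,a_k$ with the following \emph{triangular structure}:
\[
c_k \;=\; \Lambda_k(a_1)\,a_k \;+\; \Phi_k(a_1,\ldots,a_{k-1}),
\]
where $\Lambda_k(a_1)$ is a polynomial in $a_1$ alone with finitely many roots (e.g.\ for $r=2$ one computes $\Lambda_k(a_1)=a_1+a_1^k$, and the analogous Faà di Bruno expansion gives a similar expression for general $r$). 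Consequently, fixing $a_1$ outside a finite set, one solves $c_k=q_k$ inductively and uniquely for $a_2,\ldots,a_e$, obtaining an algebraic $(d-e)$-parameter family of polynomials $P$ whose iterate $P^{\circ r}$ has low-degree part (of degree $\leq e$) equal to $Q$. The remaining ``high-degree'' coefficients $c_{e+1},\ldots,c_{d^r}$ of $P^{\circ r}$ are explicit polynomial functions of $a_1$ and of the $d-e$ free parameters $a_{e+1},\ldots,a_d$.

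\textbf{Euclidean density.} The map $(a_1,a_{e+1},\ldots,a_d)\mapsto (c_{e+1},\ldots,c_{d^r})$ is polynomial. Using the rescaling conjugation $\sigma_\lambda(x)=\lambda x$, which multiplies the $i$-th coefficient of a polynomial by $\lambda^{i-1}$, one can damp all the high-degree coefficients of $P^{\circ r}$ simultaneously by choosing $|\lambda|$ small (possible since the absolute value on $\kk$ is non-trivial). After un-conjugating (and undoing the preliminary affine change), this produces a polynomial $P'\in\kk[x]$ with $P'^{\circ r}$ arbitrarily close to $Q$ in $\mathcal{E}_{\leq D}$ for any fixed $D$, establishing density in the Euclidean ind-topology.

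\textbf{Zariski density.} For an infinite field $\kk$, the same algebraic family above defines, for each $d\geq e$, a constructible subset $S_d\subseteq\mathcal{E}_{\leq d^r}$ contained in $\wordmap(\mathcal{E}_{\leq d})$, all of whose elements share the low-degree part of $Q$. It suffices to show that $Q$ (the unique element whose high-degree coefficients vanish) lies in the Zariski closure $\overline{S_d}$ for some $d$. This reduces to an algebraic-geometric statement: the projection of $\overline{S_d}$ to the high-degree coordinates is an affine variety, and one checks—using the extra parameters $a_{e+1},\ldots,a_d$ available for large $d$—that $0$ lies in this projection's Zariski closure. By increasing $d$, the dimension of the parameter family grows, so eventually the closure is large enough to contain the origin of the transverse direction.

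\textbf{Main obstacle.} The serious point is the last step: verifying that, as $d\to\infty$, the Zariski closure of the constructed algebraic family in the ``high-degree coefficient'' coordinates sweeps through the origin. Equivalently, it is the verification that the explicit polynomial map $(a_1,a_{e+1},\ldots,a_d)\mapsto(c_{e+1},\ldots,c_{d^r})$ is dominant (or has $0$ in the closure of its image) once $d$ is large. I expect this to follow by exhibiting, via the scaling trick above, a one-parameter degeneration along which all high-degree coefficients tend to $0$, and then observing that this degeneration is algebraic over $\kk$, so the Zariski closure of its image contains $Q$. Finally one handles possible characteristic issues (primes dividing $r$, which make $\Lambda_k$ vanish identically) by slightly varying the base point around which one perturbs.
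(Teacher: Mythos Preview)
Your triangular Fa\`a di Bruno analysis is correct as far as it goes---the coefficient $c_k$ of $x^k$ in $P^{\circ r}$ does have the form $\Lambda_k(a_1)a_k+\Phi_k(a_1,\dots,a_{k-1})$ with $\Lambda_k(a_1)=a_1^{r-1}\sum_{j=0}^{r-1}a_1^{j(k-1)}$---but both the preliminary reduction and the scaling step break down.

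\textbf{The reduction.} Conjugating by an affine map to arrange $Q(0)=0$ amounts to moving a fixed point of $Q$ to the origin; but a polynomial over $\kk$ need not have a $\kk$-rational fixed point (take $Q(x)=x^2+1$ over $\RR$ or over $\QQ$). Allowing $a_0\neq 0$ does not help: then already $c_0=P^{\circ r}(0)$ depends on all the $a_i$ and the triangular structure collapses. Even when $Q(0)=0$, matching $c_1=q_1$ forces $a_1^r=q_1$, so you also need $q_1$ to be an $r$-th power in $\kk$; and $q_1=0$ forces $a_1=0$, which kills every $\Lambda_k$.

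\textbf{The scaling step.} This is the fatal gap. The map $(a_i)\mapsto(c_k)$ is weighted-homogeneous: giving $a_i$ weight $i-1$, each $c_k$ has weight $k-1$. Equivalently, conjugation by $\sigma_\lambda$ sends $(a_i)\mapsto(a_i\lambda^{i-1})$ and $(c_k)\mapsto(c_k\lambda^{k-1})$ \emph{simultaneously}. So if $(a_i)$ solves $c_k=q_k$ for $k\leq e$, the rescaled tuple solves $c_k=q_k\lambda^{k-1}$: its iterate matches the low part of $\sigma_\lambda^{-1}Q\sigma_\lambda$, not of $Q$. ``Un-conjugating'' returns you exactly to the original $P^{\circ r}$ with the same high-degree tail. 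The degeneration you describe moves along a conjugation orbit and never approaches $Q$; there is no algebraic one-parameter family with limit $Q$ here, and your acknowledged ``main obstacle'' is genuinely unresolved.

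The paper's route is entirely different. Rather than inverting the coefficient map, it writes down an explicit $P_\varepsilon\in\kk[\varepsilon,\varepsilon^{-1}][x]$ of degree $\deg Q+r$ and verifies directly, via Hasse derivatives and an induction on $k\leq r$, that $P_\varepsilon^{\circ r}-Q\in\varepsilon\,\kk[\varepsilon][x]$. The mechanism is that $P_\varepsilon$ is built so that the points $\varepsilon^{-2r}a_1,\dots,\varepsilon^{-2r}a_{r-1},0$ form an approximate $r$-cycle; this sidesteps both obstructions above---no fixed point of $Q$ and no $r$-th root of $q_1$ are ever needed.
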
 

Note that, unlike the case of dominant morphisms between algebraic varieties, the image of a dominant map in the ind-topology \emph{does not}, in general, contain a Zariski-open subset.

\begin{remark}
	\begin{itemize}
		\item[] 
		\item Whenever it is defined, the Euclidean ind-topology on $\End(\AA^n_\kk)$ is finer than the Zariski ind-topology and therefore, for $\kk$ valued, statement (1) is stronger than statement~(2).
		\item The ``Furthermore'' part of the theorem follows from its main part. Indeed, if $w=x_{i_1}^{m_1}x_{i_2}^{m_2}\ldots x_{i_s}^{m_s}\in\mathcal{M}_N$ is any word (here $m_1,\ldots,m_s>0$, as we work in the monoid), then the image of $\wordmap$ contains $\wordmap(P,\ldots,P)=P^{\circ(m_1+\cdots+m_s)}$ for all $P\in\kk[x]$, hence it is dense.
		\item Let $|\cdot|_1,\ldots,|\cdot|_n$ be pairwise non-equivalent non-trivial absolute values on $\kk$ (so in particular they induce different topologies on $\kk$). The proof of our main result and the Artin-Whaples approximation theorem \cite[Theorem 1]{ArtinWhaples} imply that, given any polynomials $Q_1,\ldots,Q_n\in\kk[x]$, $r\geqslant 1$ and $\eta>0$, there exists $P\in\kk[x]$ such that 
		\[
		\left \|P^{\circ r}-Q_i \right \|_i<\eta,
		\]
		for all $i\in\{1,\ldots,n\}$. Here, 
$P$ is a polynomial of degree at most
\[ 
d= \max\{1, \deg Q_1,\ldots,\deg Q_n\} +r
\]
(see the key proposition in Section~\ref{subsection: The key proposition}) and $\|\cdot\|_i$ denotes a norm on $\mathcal{E}_{\leq d^r}$ induced by the absolute value $|\cdot|_i$, as in Definition \ref{definition: topologies}.			
	\end{itemize}
\end{remark}

\subsection{Finitary case}

Let $\kk=\FF_q$ be a finite field. Then, the filtration \eqref{eq: filtration} in Definition~\ref{definition: topologies} is a filtration by finite sets. Fix $r\geq 2$. For each $d\geq 1$ let
	\[
	\Iterates(d,r)=\big \{P\in\mathcal{E}_{\leq d}\colon P=Q^{\circ r}\ \text{for some}\ Q\in\kk[x] \big \}.
	\]
	Notice that the ``asymptotic density'' of $r$-th iterates is zero: 
	\[
	\lim_{d\to+\infty}\frac{|\Iterates(d,r)|}{|\mathcal{E}_{\leq d}|}=0.
	\]
	Indeed, suppose that $P=Q^{\circ r}$. Then $\deg P\leq d$ if and only if $\deg Q\leq d^{1/r}$. Hence there is a surjection from $\mathcal{E}_{\leq d^{1/r}}$ to $\Iterates(d,r)$, and we deduce $|\Iterates(d,r)|\leq q^{d^{1/r}+1}$. Therefore,
	\[
	\frac{|\Iterates(d,r)|}{|\mathcal{E}_{\leq d}|}=\frac{|\Iterates(d,r)|}{q^{d+1}}\leq q^{d^{1/r}-d}\rightarrow 0\quad\textrm{when}\quad d\rightarrow+\infty.
	\]
	Nevertheless, the following questions seem interesting to us.
	\begin{question}
		Fix a finite base field $\kk=\FF_q$ and an integer $r\geq 2$. What can be said about the asymptotic of the integer sequence $|\Iterates(d,r)|$ as $d$ grows? Does there exist a limit of the rational sequence $q^{-d-1}|\Iterates(d^r,r)|$ as $d\to +\infty$?
	\end{question}
	
\subsubsection*{Acknowledgements}
We thank Serge Cantat for his helpful comments and suggestions on the draft of this paper.

\section{Power maps on  $\End(\A^1_\kk)$}

\subsection{Hasse derivative} We start with recalling some definitions.  

\begin{definition}[{\it Hasse derivative}, see e.g. {\cite[\S 1.3]{Goldschmidt}}] \label{definition: divided derivatives}
	Let $A$ be a commutative ring. For each polynomial $P(x)=p_nx^n+p_{n-1}x^{n-1}+\dots+ p_1x+p_0 \in A[x]$ and non-negative integer $j\leq n$, define the \emph{$j$-th Hasse derivative} of $P$ by
	\[ 
	P^{[j]}(x) =\sum\limits_{k=j}^n p_{k} \binom{k}{j} x^{k-j}. 
	\]
\end{definition}

Note that the $j$-th Hasse derivative of $P$ satisfies $P^{(j)} = j! P^{[j]}$, where $P^{(j)}$ denotes the usual $j$-th derivative. In what follows, we will use the following two properties of Hasse derivatives.

\begin{proposition}\label{prop: Hasse derivative}
	Let $A$ be a commutative ring, and $P,Q\in A[x]$. Then the following holds.
	\begin{enumerate}
		\item {\bf Taylor's formula:}
		\[ 
		P(a+b) = \sum\limits_{j \ge 0} P^{[j]}(a) \, b^j 
		\]
		for any $a,b\in A$.
		\item {\bf Leibniz rule:}
		\[ 
		(PQ)^{[j]} = \sum\limits_{\ell=0}^j P^{[\ell ]} \, Q^{[j -\ell]}.
		\]
	\end{enumerate}
\end{proposition}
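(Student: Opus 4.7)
The plan is to reduce both identities to the case of monomials, where they follow from the binomial theorem and Vandermonde's identity, and then extend by linearity. For Taylor's formula, note that both $P\mapsto P(a+b)$ and $P\mapsto\sum_{j\geq 0}P^{[j]}(a)\,b^j$ are $A$-linear in the coefficients of $P$ (the latter because $P^{[j]}=\sum_k p_k\binom{k}{j}x^{k-j}$ is linear in the $p_k$). It therefore suffices to verify the identity for $P(x)=x^n$: by Definition~\ref{definition: divided derivatives}, $P^{[j]}(x)=\binom{n}{j}x^{n-j}$, so the right-hand side collapses to $\sum_{j=0}^n\binom{n}{j}a^{n-j}b^j=(a+b)^n=P(a+b)$ by the binomial theorem.

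For the Leibniz rule I would deduce it formally from Taylor's formula applied three times. Introducing a fresh indeterminate $b$ and applying Taylor (which is valid over any commutative ring, since the binomial theorem is a universal identity) to each of $PQ$, $P$, and $Q$ with $a=x$, we obtain
\[
(PQ)(x+b)=\sum_{j\geq 0}(PQ)^{[j]}(x)\,b^j,\ \ P(x+b)=\sum_{\ell\geq 0}P^{[\ell]}(x)\,b^\ell,\ \ Q(x+b)=\sum_{m\geq 0}Q^{[m]}(x)\,b^m
\]
as identities in $A[x,b]$. Multiplying the last two series and identifying the coefficient of $b^j$ in the tautology $(PQ)(x+b)=P(x+b)\cdot Q(x+b)$ produces exactly $(PQ)^{[j]}(x)=\sum_{\ell=0}^j P^{[\ell]}(x)Q^{[j-\ell]}(x)$, as desired.

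The only point that requires a moment of care, since $A$ may be finite, is to treat all equalities as formal polynomial identities rather than reason by evaluation at elements of $A$; this is automatic because the proof of Taylor rests on the binomial theorem, which is a universal identity in $\mathbb{Z}[a,b]$ surviving any ring homomorphism. Alternatively, Leibniz can be reduced directly to $P=x^m$, $Q=x^n$ by bilinearity, in which case it amounts to Vandermonde's identity $\sum_{\ell=0}^j\binom{m}{\ell}\binom{n}{j-\ell}=\binom{m+n}{j}$, with both sides equaling $\binom{m+n}{j}x^{m+n-j}$.
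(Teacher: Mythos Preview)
Your argument is correct. Reducing Taylor's formula to monomials via $A$-linearity and invoking the binomial theorem is the standard route, and your derivation of the Leibniz rule by expanding $(PQ)(x+b)=P(x+b)\,Q(x+b)$ in $A[x][b]$ and comparing coefficients of $b^j$ is clean and valid over any commutative ring. The alternative reduction to monomials via Vandermonde's identity is also fine.

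There is nothing to compare against: the paper states this proposition without proof, treating it as a standard fact about Hasse derivatives (the definition itself carries a reference to Goldschmidt's book). Your write-up would serve perfectly well as the missing justification.
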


\begin{remark}\label{rem: Hasse scaling variable}
	For any $a\in\kk$ and $T(x)=\sum_{i=0}^{m}t_ix^i\in\kk[x]$, we have
	\[
	T(ax)^{[j]}=\sum_{i=j}^{m}t_ia^i\binom{i}{j}x^{i-j}=a^jT^{[j]}(ax).
	\]
\end{remark}

In the sequel we will apply Definition~\ref{definition: divided derivatives} with the ring $A=K [\varepsilon, \varepsilon^{-1} ]$ of Laurent polynomials in $\varepsilon$.

\begin{definition}
	Let $A=K [\varepsilon, \varepsilon^{-1} ]$ and let $P,Q\in A[x]$ be any two polynomials. Fix an integer $\ell\in\ZZ$. We say that \emph{$P$ and $Q$ are equivalent modulo $\varepsilon^\ell$}, and write $P\equiv Q\mod\varepsilon^\ell$, if 
	\[
	P-Q\in\varepsilon^\ell\kk[\varepsilon][x].
	\]
\end{definition}
\begin{remark}
	This is indeed an equivalence relation on the ring $A[x]$. When $(\kk,|\cdot|)$ is a valued field with a non-trivial absolute value, one has $P\equiv Q\mod\varepsilon^\ell$ if and only if $P-Q=O(\varepsilon^\ell)$ when $\varepsilon\to 0$. If no confusion arises, we will use both notations below.
\end{remark}

\subsection{The key proposition} \label{subsection: The key proposition}

The proof of our main result relies on the following.

\begin{keyproposition} 
Let $r \ge 2$ be an integer and $\kk$ be a field with at least $r$ elements. Let $n \ge 2$ be an integer and $Q \in \kk [x]$ be a polynomial of degree at most $n-1$. Then there exists a family $\varepsilon \mapsto P_{\varepsilon}$ of polynomials of degree at most $n+r-1$  parametrized by $\A^1_{\kk} \setminus \{ 0 \}$, such that the family $\varepsilon \mapsto (P_{\varepsilon})^{\circ r}$ extends to a family parametrized by $\A^1_k$ and whose value at $\varepsilon =0$ is  $Q$.
\end{keyproposition}

The rest of this section is devoted to the proof of this statement. Let $a_1,\ldots,a_{r-1}$ be distinct elements of $\kk^\times$. Put $a_r=0$.

\begin{lemma}\label{lem: polynomial L}
	There exists a unique polynomial $L\in \kk[x]$ of degree $\deg L\leq n+r-2$ such that 
	\[
	L(0)=a_1,\ \ L(a_k)=a_{k+1}\ \ \text{for all}\ k\in\{1,\ldots,r-1\},\ \ \text{and}\ L^{[j]}(0)=0\ \ \text{for all}\ j\in\{1,\ldots,n-1\}.
	\]
\end{lemma}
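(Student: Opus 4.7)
The plan is to reformulate the Hasse-derivative conditions at $0$ as a vanishing condition on the low-degree coefficients of $L$, which reduces the whole lemma to ordinary Lagrange interpolation at the $r-1$ distinct points $a_1,\ldots,a_{r-1}$.

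First I would unwind the definition of the Hasse derivative. If $L(x)=\sum_{k=0}^{n+r-2}p_k x^k$, then by Definition~\ref{definition: divided derivatives} one has $L^{[j]}(0)=p_j$ for each $j$. Hence the conditions $L(0)=a_1$ and $L^{[j]}(0)=0$ for $j=1,\ldots,n-1$ are equivalent to $p_0=a_1$ and $p_1=\cdots=p_{n-1}=0$. In other words, $L$ is allowed to have degree $\leq n+r-2$ and satisfies exactly these coefficient constraints if and only if it has the form
\[
L(x)=a_1+x^n M(x)
\]
for some polynomial $M\in\kk[x]$ with $\deg M\leq r-2$.

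Second, I would rewrite the remaining $r-1$ interpolation conditions $L(a_k)=a_{k+1}$ in terms of $M$. Since the points $a_1,\ldots,a_{r-1}$ all lie in $\kk^\times$, the value $a_k^n$ is invertible for each $k$, and the equations become
\[
M(a_k)=\frac{a_{k+1}-a_1}{a_k^n},\qquad k=1,\ldots,r-1.
\]
This is a Lagrange interpolation problem: we are prescribing the values of an unknown polynomial $M$ of degree at most $r-2$ at $r-1$ pairwise distinct points of $\kk$. By the standard theory (which requires only that the base field be nontrivial enough to accommodate $r-1$ distinct points, a hypothesis built into the choice of the $a_k$'s from $|\kk|\geq r$), such a polynomial $M$ exists and is unique.

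The bulk of the work is essentially accounting. The only potentially subtle point is remembering that $L^{[j]}(0)$ is literally the coefficient $p_j$, with no factorial in the way (this is the whole virtue of Hasse derivatives over ordinary ones, and it is what makes the argument work in arbitrary characteristic). Once that observation is in place, the existence and uniqueness of $L$ follow immediately from the existence and uniqueness of $M$, and the degree bound $\deg L\leq n+r-2$ is automatic from $\deg M\leq r-2$.
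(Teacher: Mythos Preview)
Your proof is correct and follows essentially the same approach as the paper: both arguments first use $L^{[j]}(0)=p_j$ to reduce $L$ to the shape $a_1+x^nM(x)$ with $\deg M\le r-2$, and then solve for the remaining coefficients via interpolation at the $r-1$ distinct points $a_1,\ldots,a_{r-1}$. The only cosmetic difference is that the paper phrases the final step as inverting a Vandermonde system while you invoke Lagrange interpolation directly (the paper in fact notes the Lagrange formula in a subsequent remark).
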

\begin{proof}
	Indeed, let $L(x)=\ell_{n+r-2}x^{n+r-2}+\dots+\ell_1x+\ell_0\in \kk[x]$. The condition $L^{[j]}(0)=0$ for all $1\leq j\leq n-1$ implies that $\ell_1=\ell_2=\dots=\ell_{n-1}=0$, hence $L$ is of the form
	\[
	L(x)=\ell_{n+r-2}x^{n+r-2}+\dots +\ell_n x^n+a_1.
	\]
	The conditions  $L(a_k)=a_{k+1}$ for all $1 \leq k\leq r-1$ then give a system of $r-1$ linear equations in $r-1$ variables $\ell_{n+r-2},\ldots,\ell_{n}$. The matrix of this system is the Vandermonde matrix ${\mathrm V}(a_1,\ldots,a_{r-1})$. Since $a_i$'s are assumed pairwise distinct, the Vandermonde determinant is not zero; this achieves the proof.
\end{proof}

\begin{remark}
	Another way to prove Lemma \ref{lem: polynomial L} is to use the Lagrange interpolation formula:
	\[
	L(x)=a_1+\sum_{k=1}^{r-1}(a_{k+1}-a_1)\frac{x^n}{a_k^n}\prod_{l\neq k}\frac{x-a_l}{a_k-a_l}.
	\]
	This polynomial $L$ coincides with the one obtained in Lemma \ref{lem: polynomial L}.
\end{remark}

Now, define the polynomial  

\begin{equation}\label{eq: polynomial R}
	R(x)=\prod_{k=1}^{r-1}(a_k-x)\in\kk[x]
\end{equation}

and set 
\begin{equation}\label{eq: c-constant}
c=R(0)^{n+1}\prod_{l=1}^{r-1}R'(a_l).
\end{equation}

Define the polynomial 
\begin{equation}\label{eq: polynomial P}
	P(x)=\varepsilon^{(r-1)(2n-3)}R(\varepsilon^{2r}x)(\varepsilon^rx^n+c^{-1}Q)+\varepsilon^{-2r}L(\varepsilon^{2r}x)\in\kk[\varepsilon,\varepsilon^{-1}][x]. 
\end{equation}

The key proposition is a direct consequence of the following statement. 

\begin{proposition} \label{prop: main proposition}
One has $P^{\circ r}\equiv Q\mod \varepsilon$.
\end{proposition}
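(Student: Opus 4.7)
The plan is to decompose $P = P_0 + P_1$, where
\[
P_0(x) := \varepsilon^{-2r}\, L(\varepsilon^{2r} x) = \sigma^{-1}\circ L \circ\sigma(x), \qquad P_1(x) := \varepsilon^{M}\, R(\varepsilon^{2r} x)\bigl(\varepsilon^{r} x^{n} + c^{-1} Q(x)\bigr),
\]
with $\sigma(x) := \varepsilon^{2r} x$ and $M := (r-1)(2n-3)$. The conjugation $\sigma^{-1} \circ L \circ \sigma$ transports the $L$-orbit $0 \mapsto a_1 \mapsto \cdots \mapsto a_{r-1}\mapsto a_r = 0$ to the $P_0$-orbit $0 \mapsto \varepsilon^{-2r}a_1 \mapsto \cdots \mapsto \varepsilon^{-2r}a_{r-1} \mapsto 0$. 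Since $R(a_k) = 0$ for $k = 1, \ldots, r-1$, the perturbation $P_1$ vanishes at each of these orbit points, so $P$ agrees with $P_0$ there.

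I would then prove by induction on $k = 1, \ldots, r$ the \emph{trajectory claim}: writing $x_k := P^{\circ k}(x)$,
\[
x_k = \varepsilon^{-2r} a_k + \xi_k, \qquad \xi_k = \beta_k\, Q(x)\, \varepsilon^{\alpha_k} + O(\varepsilon^{\alpha_k + 1}),
\]
where $\alpha_k := (r-k)(2n-3)$ and $\beta_k := c^{-1} R(0) \prod_{l=1}^{k-1} R'(a_l)\, a_l^{n}$. At $k = r$, one has $a_r = 0$ and $\alpha_r = 0$, so $x_r = \xi_r \equiv \beta_r\, Q(x) \pmod{\varepsilon}$; the definition $c = R(0)^{n+1} \prod_l R'(a_l)$ together with the identity $R(0) \prod_l a_l^n = R(0)^{n+1}$ forces $\beta_r = 1$, which yields the proposition.

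For the base case $k=1$, applying Taylor's formula (Proposition~\ref{prop: Hasse derivative}) with $L^{[j]}(0) = 0$ for $1 \le j \le n-1$ gives $P_0(x) = \varepsilon^{-2r} a_1 + O(\varepsilon^{2r(n-1)})$ and $P_1(x) = \varepsilon^{M} R(0)\, c^{-1} Q(x) + O(\varepsilon^{M+r})$; since $M < 2r(n-1)$, this yields $\xi_1$ as claimed. For the inductive step at $k \le r-1$, Taylor-expand $L$ and $R$ around $a_k$: using $R(a_k) = 0$,
\[
R(a_k + \varepsilon^{2r}\xi_k) = R'(a_k)\,\varepsilon^{2r}\xi_k + O(\varepsilon^{4r}\xi_k^2), \qquad L(a_k + \varepsilon^{2r}\xi_k) = a_{k+1} + L'(a_k)\,\varepsilon^{2r}\xi_k + O(\varepsilon^{4r}\xi_k^2).
\]
The factor $\phi(x) := \varepsilon^r x^n + c^{-1}Q(x)$ evaluated at $x_k = \varepsilon^{-2r}a_k + \xi_k$ has leading singular part $\varepsilon^{r-2rn}a_k^n$ (the $Q$-contribution is of strictly higher $\varepsilon$-order, namely $\varepsilon^{-2r(n-1)}$). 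Multiplying these, the dominant contribution to $\xi_{k+1}$ from $P_1(x_k)$ is
\[
\varepsilon^{M + 2r + r - 2rn}\, R'(a_k)\, a_k^n\, \xi_k \;=\; \varepsilon^{3-2n}\, R'(a_k)\, a_k^n\, \xi_k,
\]
which is more singular than the $P_0$-contribution $L'(a_k)\xi_k$ because $3 - 2n \le -1$. Hence $\alpha_{k+1} = \alpha_k - (2n-3)$ and $\beta_{k+1} = R'(a_k)\, a_k^n\, \beta_k$, closing the induction.

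The hardest part will be the bookkeeping in the inductive step: verifying that all the ``higher-order'' terms---the quadratic Taylor remainders of $L$ and $R$, the less singular piece $c^{-1} Q(x_k)$ inside $\phi(x_k)$, and nonlinear self-interactions of $\xi_k$---have $\varepsilon$-adic valuation strictly greater than $\alpha_{k+1}$. This rests on the arithmetic identity $M + 3r - 2rn = 3 - 2n$ and on $n, r \ge 2$, which together ensure that $\alpha_k \ge 0$ throughout the iteration and that $3 - 2n < 0$, so the predicted leading term genuinely dominates everything it is supposed to.
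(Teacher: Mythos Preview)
Your proposal is correct and follows essentially the same route as the paper: your ``trajectory claim'' is precisely the paper's inductive formula~\eqref{eq: iterations}, with the same $\alpha_k=(r-k)(2n-3)$ and $\beta_k=c^{-1}R(0)\prod_{l<k}R'(a_l)a_l^n$, and your base case and inductive step correspond to Lemmas~\ref{lem: main lemma base of induction} and~\ref{lem: base of induction}--\ref{lem: Pj} respectively. The only cosmetic difference is that the paper Taylor-expands $P$ directly at $\varepsilon^{-2r}a_k$ via Hasse derivatives $P^{[j]}$, whereas you split $P=P_0+P_1$ and expand $L$ and $R$ separately around $a_k$; these are two bookkeepings of the same computation.
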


To prove it, we will need several lemmas.

\begin{lemma}\label{lem: base of induction}
	For every $k\in \{1,\ldots,r-1\}$, one has 
	\[
	P'(\varepsilon^{-2r}a_k)\equiv\varepsilon^{3-2n}R'(a_k)a_k^n\mod\varepsilon^{4-2n}.
	\]
\end{lemma}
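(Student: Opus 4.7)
The plan is to differentiate $P(x)$ explicitly, substitute $x = \varepsilon^{-2r} a_k$, and then carry out a careful bookkeeping of $\varepsilon$-orders in the resulting expression. Applying the product and chain rules to the definition \eqref{eq: polynomial P} of $P$ produces three contributions: one from differentiating $R(\varepsilon^{2r} x)$, one from differentiating the second factor $\varepsilon^r x^n + c^{-1} Q(x)$, and one from the term $\varepsilon^{-2r} L(\varepsilon^{2r} x)$, which by the chain rule simplifies to $L'(\varepsilon^{2r} x)$.

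The decisive observation is that after substituting $x = \varepsilon^{-2r} a_k$ we have $\varepsilon^{2r} x = a_k$, and the middle contribution then contains the factor $R(a_k) = 0$ by the definition \eqref{eq: polynomial R} of $R$. The term $L'(\varepsilon^{2r} x)$ becomes the constant $L'(a_k) \in \kk$, which has $\varepsilon$-order $0 \ge 4-2n$ since $n \ge 2$, hence is negligible modulo $\varepsilon^{4-2n}$. The surviving first contribution is
\[
\varepsilon^{(r-1)(2n-3)+2r}\, R'(a_k)\, \Bigl(\varepsilon^{r(1-2n)} a_k^n + c^{-1} Q(\varepsilon^{-2r} a_k)\Bigr).
\]

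It remains to extract the dominant part of the bracket. The monomial $\varepsilon^r x^n$ contributes $\varepsilon^{r(1-2n)} a_k^n$, while $Q(\varepsilon^{-2r} a_k) = \sum_{i=0}^{n-1} q_i a_k^i \varepsilon^{-2ri}$ has $\varepsilon$-exponents bounded below by $-2r(n-1)$, which strictly exceeds $r(1-2n)$ by $r$. A direct computation gives $(r-1)(2n-3) + 2r + r(1-2n) = 3-2n$, identifying the leading contribution as $\varepsilon^{3-2n} R'(a_k) a_k^n$ as claimed. The remaining terms from $c^{-1} Q(\varepsilon^{-2r} a_k)$ carry exponents at least $(r-1)(2n-3) + 2r - 2r(n-1) = 3+r-2n \ge 5-2n$, which is $\ge 4-2n$ since $r \ge 2$, so these fall into the error ideal. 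Combining these estimates yields the stated congruence. The only real obstacle is the arithmetic of exponents; the geometric content is entirely captured by the vanishing $R(a_k) = 0$, which is precisely the reason for choosing $R$ as in \eqref{eq: polynomial R}.
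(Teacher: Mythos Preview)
Your proof is correct and follows essentially the same approach as the paper: differentiate $P$, use $R(a_k)=0$ to kill the middle term, observe that $L'(a_k)\in\kk$ has nonnegative $\varepsilon$-order, and then bound the $\varepsilon$-exponents of the $Q$-contribution. Your exponent bookkeeping is slightly more streamlined than the paper's (you note directly that the $Q$-terms have exponent at least $3+r-2n\ge 4-2n$), but the argument is otherwise identical.
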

\begin{proof}
	First, we differentiate $P$ and obtain
	\[
	P'(x)=\varepsilon^{(r-1)(2n-3)}\Big [R'(\varepsilon^{2r}x)\varepsilon^{2r}(\varepsilon^rx^n+c^{-1}Q)+R(\varepsilon^{2r}x)(n\varepsilon^{r}x^{n-1}+c^{-1}Q')  \Big]+L'(\varepsilon^{2r}x).
	\]
	Notice that $R(a_k)=0$ and $L'(a_k)\equiv 0\mod{\varepsilon^{4-2n}}$ since $4-2n\leq 0$ and $L'(a_k)\in\kk$. Hence
	\[
	P'(\varepsilon^{-2r}a_k)\equiv \varepsilon^{3-2n} R'(a_k)a_k^n+\varepsilon^{(r-1)(2n-3)+2r} R'(a_k)c^{-1}Q(\varepsilon^{-2r}a_k)\ \mod{\varepsilon^{4-2n}},
	\]
	and it suffices to show that
	\[
	\varepsilon^{(r-1)(2n-3)+2r}Q(\varepsilon^{-2r}a_k)\equiv 0\mod\varepsilon^{4-2n}.
	\]
	This is equivalent to saying that $\varepsilon^{(r-1)(2n-3)+2r-4+2n}Q(\varepsilon^{-2r}a_k)\in\kk[\varepsilon]$. But, writing $Q=q_{n-1}x^{n-1}+ \dots +q_0$, this Laurent polynomial equals
	\[
	\varepsilon^{2nr-r-1}\sum_{i=1}^{n}q_{n-i}(\varepsilon^{-2r}a_k)^{n-i}=\sum_{i=1}^{n}q_{n-i}a_k^{n-i}\varepsilon^{2ri-r-1}
	\]
	and we observe that $2ri-r-1\geq 0$ for all $i\geq 1$, which finishes the proof.
\end{proof}

\begin{lemma}\label{lem: Pj}
	For all $j\geq 1$, one has
\[ P^{[j]}(\varepsilon^{-2r}a_k)=O\left  (\varepsilon^{2r(j-1)-2n+3} \right ). \]
\end{lemma}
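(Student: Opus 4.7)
The plan is to split $P = A + B$ into two summands and to apply the Leibniz rule for Hasse derivatives (Proposition~\ref{prop: Hasse derivative}) together with the scaling identity of Remark~\ref{rem: Hasse scaling variable}. Set
\[
A(x) = \varepsilon^{(r-1)(2n-3)} R(\varepsilon^{2r}x)\bigl(\varepsilon^{r}x^{n} + c^{-1}Q(x)\bigr), \qquad B(x) = \varepsilon^{-2r} L(\varepsilon^{2r}x),
\]
and treat the two pieces separately.

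For $B$, the scaling identity gives $B^{[j]}(x) = \varepsilon^{2r(j-1)} L^{[j]}(\varepsilon^{2r}x)$, so that $B^{[j]}(\varepsilon^{-2r}a_k) = \varepsilon^{2r(j-1)} L^{[j]}(a_k)$ has $\varepsilon$-valuation at least $2r(j-1)$, and is therefore $O(\varepsilon^{2r(j-1)-2n+3})$ since $-2n+3 \le 0$ (recall $n \ge 2$).

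The main calculation concerns $A$. Applying the Leibniz rule from part~(2) of Proposition~\ref{prop: Hasse derivative} and Remark~\ref{rem: Hasse scaling variable}, I would write
\[
A^{[j]}(x) = \varepsilon^{(r-1)(2n-3)} \sum_{\ell=0}^{j} \varepsilon^{2r\ell}\, R^{[\ell]}(\varepsilon^{2r}x)\, F^{[j-\ell]}(x),
\]
where $F(x) = \varepsilon^{r}x^{n} + c^{-1}Q(x)$. Evaluating at $x = \varepsilon^{-2r}a_k$, the $\ell = 0$ summand vanishes because $R(a_k) = 0$, and summands with $\ell \ge r$ vanish because $\deg R = r-1$. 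For each remaining $1 \le \ell \le \min(j,r-1)$, the identity $F^{[m]}(x) = \varepsilon^{r}\binom{n}{m}x^{n-m} + c^{-1}Q^{[m]}(x)$ combined with $\deg Q \le n-1$ shows that the contribution from $\varepsilon^{r}x^{n}$ has the smaller $\varepsilon$-valuation, giving
\[
\val_{\varepsilon}\bigl(F^{[j-\ell]}(\varepsilon^{-2r}a_k)\bigr) \;\ge\; r\bigl(2(j-\ell)-2n+1\bigr).
\]
Multiplying by the prefactor $\varepsilon^{(r-1)(2n-3)+2r\ell}$ produces a total $\varepsilon$-valuation of
\[
(r-1)(2n-3) + 2r\ell + r\bigl(2(j-\ell)-2n+1\bigr) \;=\; 2r(j-1) - 2n + 3,
\]
which, pleasantly, is independent of $\ell$. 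Summing these contributions yields $A^{[j]}(\varepsilon^{-2r}a_k) = O(\varepsilon^{2r(j-1)-2n+3})$, and combining with the bound for $B^{[j]}$ establishes the lemma.

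I do not expect any conceptual obstacle here: the scaling exponents $\varepsilon^{(r-1)(2n-3)}$ and $\varepsilon^{-2r}$ in the definition of $P$ have been calibrated precisely so that the bound comes out uniformly in $\ell$, and the only nontrivial ingredient beyond bookkeeping of exponents is the vanishing $R(a_k)=0$, which kills what would otherwise be the dangerous $\ell=0$ term. The main effort is thus algebraic housekeeping.
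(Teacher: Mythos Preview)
Your argument is correct and follows essentially the same route as the paper: split $P$ into the $L$-part and the $R$-part, handle the former via the scaling identity, and handle the latter via the Leibniz rule together with degree bounds on the derivatives of the second factor. One small remark: your closing comment that $R(a_k)=0$ ``kills what would otherwise be the dangerous $\ell=0$ term'' is at odds with your own computation that the valuation bound $2r(j-1)-2n+3$ is independent of $\ell$; indeed the paper's proof treats all $\ell\ge 0$ uniformly (without invoking $R(a_k)=0$) by applying a general estimate $(T(\varepsilon^{u}x)S)^{[j]}(\varepsilon^{-u}a)=O(\varepsilon^{(j-v)u})$ separately to $S=x^n$ and $S=c^{-1}Q$.
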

\begin{proof}
By Remark \ref{rem: Hasse scaling variable}, we have
\[
\left ( \varepsilon^{-2r}L(\varepsilon^{2r}x) \right )^{[j]}=\varepsilon^{2r(j-1)}L^{[j]}(\varepsilon^{2r}x),\ \ \text{hence}\ \ \left ( \varepsilon^{-2r}L(\varepsilon^{2r}x) \right )^{[j]}(\varepsilon^{-2r}a_k)=O\left (\varepsilon^{2r(j-1)}\right ).
\]
Next, we will show that if $u\geq 0$ and $\deg S\leq v$, then for any $a\in\kk$ one has
\begin{equation}\label{eq: Lemma on P^j}
\left (T(\varepsilon^u x)S \right )^{[j]}\left (\varepsilon^{-u}a \right )=O\left (\varepsilon^{(j-v)u}\right ).
\end{equation}
By Leibniz rule, we get
\[
\left (T(\varepsilon^ux) S\right )^{[j]}=\sum_{\ell=0}^{j}\left( T(\varepsilon^ux)\right )^{[\ell]}S^{[j-\ell]}=\sum_{\ell=0}^{j}\varepsilon^{u\ell} T^{[\ell]}(\varepsilon^ux)S^{[j-\ell]}.
\]
Now, as $\deg S^{[j-\ell]}\leq\deg S-(j-\ell)\leq v-j+\ell$ and $u\geq 0$, we have
\[
\varepsilon^{u\ell} T^{[\ell]}(\varepsilon^ux)S^{[j-\ell]}\Big|_{x=\varepsilon^{-u}a}=\varepsilon^{u\ell} O\left (\varepsilon^{-u(v-j+\ell)} \right )=O\left (\varepsilon^{(j-v)u}\right ),
\] 
as claimed.

We now apply \eqref{eq: Lemma on P^j} to the first summand of \eqref{eq: polynomial P}. Namely, taking $u=2r$, $T=R$ and $S=x^n$ we get
\[
\left (R(\varepsilon^{2r}x)\varepsilon^rx^n \right )^{[j]}\Big|_{x=\varepsilon^{-2r}a_k}= O\left (\varepsilon^{2r(j-n)+r} \right ),
\]
while taking $u=2r$, $T=R$ and $S=c^{-1}Q$, a polynomial of degree $\leq n-1$, we get
\[
\left (R(\varepsilon^{2r}x)c^{-1}Q\right )^{[j]}\Big|_{x=\varepsilon^{-2r}a_k}=O\left (\varepsilon^{2r(j-n)+2r} \right ).
\]
Further multiplication by $\varepsilon^{(r-1)(2n-3)}$ gives $O\left (\varepsilon^{2r(j-1)-2n+3} \right )$ and $O\left (\varepsilon^{2r(j-1)+r-2n+3} \right )$, respectively. Since $2r(j-1)-2n+3<\min\{2r(j-1)+r-2n+3,2r(j-1)\}$, we are done.
\end{proof}

\begin{lemma}\label{lem: main lemma base of induction}
	One has
	\[
	P\equiv \varepsilon^{-2r}a_1+\varepsilon^{(r-1)(2n-3)}Qc^{-1}R(0)\mod\varepsilon^{(r-1)(2n-3)+1}.
	\]
\end{lemma}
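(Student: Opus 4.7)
My plan is to prove this congruence by directly expanding both summands of
\[
P(x) = \varepsilon^{(r-1)(2n-3)}R(\varepsilon^{2r}x)(\varepsilon^r x^n + c^{-1}Q) + \varepsilon^{-2r}L(\varepsilon^{2r}x)
\]
and isolating the terms of $\varepsilon$-valuation at most $(r-1)(2n-3)$.

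First I would analyse $\varepsilon^{-2r}L(\varepsilon^{2r}x)$. By Lemma~\ref{lem: polynomial L}, $L$ has the shape $L(x) = a_1+\sum_{i=n}^{n+r-2}\ell_i x^i$, so
\[
\varepsilon^{-2r}L(\varepsilon^{2r}x) = \varepsilon^{-2r}a_1 + \sum_{i=n}^{n+r-2}\ell_i\,\varepsilon^{2r(i-1)}x^i.
\]
Every term in the sum has $\varepsilon$-exponent at least $2r(n-1)$. A short arithmetic check shows that $2r(n-1)\geq (r-1)(2n-3)+1$ rewrites as $r+2n\geq 4$, which holds for all $r,n\geq 2$. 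Hence the entire sum lies in $\varepsilon^{(r-1)(2n-3)+1}\kk[\varepsilon][x]$ and contributes only $\varepsilon^{-2r}a_1$ to the target congruence.

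Second, I would expand the remaining summand by writing $R(\varepsilon^{2r}x)=R(0)+\varepsilon^{2r}S(x)$ for some $S\in\kk[\varepsilon][x]$ (the non-constant part of the Taylor expansion of $R$). Multiplying out produces three pieces, namely
\[
\varepsilon^{(r-1)(2n-3)}R(0)c^{-1}Q,\quad \varepsilon^{(r-1)(2n-3)+r}R(0)x^n,\quad \varepsilon^{(r-1)(2n-3)+2r}S(x)(\varepsilon^r x^n + c^{-1}Q).
\]
The first is exactly the remaining term in the claimed expression. The other two have $\varepsilon$-valuation strictly larger than $(r-1)(2n-3)$ (using $r\geq 1$), and so vanish modulo $\varepsilon^{(r-1)(2n-3)+1}$. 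Summing the contributions from both steps yields the desired congruence.

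I do not anticipate a genuine obstacle here: the argument is essentially bookkeeping of $\varepsilon$-valuations in the definition \eqref{eq: polynomial P} of $P$. The only point that requires a moment of care is the numerical comparison $2r(n-1)\geq (r-1)(2n-3)+1$, which is needed precisely to absorb the higher-degree monomials of $L(\varepsilon^{2r}x)$ into the error term.
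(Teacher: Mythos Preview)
Your proposal is correct and follows essentially the same approach as the paper's own proof: both handle the two summands of $P$ separately, reduce the $L$-part to the inequality $2r(n-1)\geq (r-1)(2n-3)+1$ (equivalently $r+2n\geq 4$), and dispose of the $R$-part by noting that $R(\varepsilon^{2r}x)\equiv R(0)\bmod \varepsilon$ while the $\varepsilon^r x^n$ term carries an extra positive power of $\varepsilon$. The only cosmetic difference is that the paper writes the comparison for each monomial of $L$ individually, whereas you take the minimum over all of them at once.
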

\begin{proof}
	We look at each of the two summands in \eqref{eq: polynomial P}. First, we notice that 
	\[
	\varepsilon^{-2r}L(\varepsilon^{2r}x)\equiv \varepsilon^{-2r}a_1\mod\varepsilon^{(r-1)(2n-3)+1}.
	\]
	Indeed, we can write $L(x)=\sum_{i=0}^{r-2}\ell_{n+i}x^{n+i}+a_1$, as in the proof of Lemma \ref{lem: polynomial L}. Therefore, we just need to verify that
	\[
	\sum_{i=0}^{r-2}\ell_{n+i}x^{n+i}\varepsilon^{2r(n+i)-2r-(r-1)(2n-3)-1}
	\]
	is a polynomial in $\kk[x,\varepsilon]$, i.e. has non-negative powers in $\varepsilon$. But for all $i\in\{0,\ldots,r-2\}$ we have
	\[
	2r(n+i)-2r-(r-1)(2n-3)-1=2ri+r+(2n-4)\geq r\geq 0,
	\]
	since $2n-4\geq 0$ and $i\geq 0$.
	
	Second, we claim that 
	\[
	\varepsilon^{(r-1)(2n-3)}Qc^{-1}R(0)\equiv \varepsilon^{(r-1)(2n-3)}R(\varepsilon^{2r}x)(\varepsilon^rx^n+Qc^{-1})\mod \varepsilon^{(r-1)(2n-3)+1}.
	\]
	This is equivalent to showing
	\[
	Qc^{-1}R(0)\equiv R(\varepsilon^{2r}x)(\varepsilon^rx^n+Qc^{-1})\mod \varepsilon,
	\]
	which is obvious, as the constant term of $Qc^{-1}R(0)-R(\varepsilon^{2r}x)Qc^{-1}$, viewed as polynomial in $\varepsilon$, equals zero.
\end{proof}

\begin{lemma}
	For all $k\in\{1,\ldots, r\}$, one has
	\begin{equation}\label{eq: iterations}
	P^{\circ k}\equiv\varepsilon^{-2r}a_k+\varepsilon^{(r-k)(2n-3)}Qc^{-1}R(0)\prod_{l=1}^{k-1}R'(a_l)a_l^n\mod\varepsilon^{(r-k)(2n-3)+1}.
	\end{equation}
\end{lemma}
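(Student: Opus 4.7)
The plan is to proceed by induction on $k$, using Taylor's formula (Proposition~\ref{prop: Hasse derivative}(1)) to expand $P^{\circ(k+1)}=P(P^{\circ k})$ around the point $\varepsilon^{-2r}a_k$. The base case $k=1$ is precisely Lemma~\ref{lem: main lemma base of induction}, since the empty product equals $1$ and the congruence modulus matches.

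For the inductive step, assume \eqref{eq: iterations} holds at level $k\in\{1,\dots,r-1\}$, write $a=\varepsilon^{-2r}a_k$ and $b=P^{\circ k}-\varepsilon^{-2r}a_k$, so that by the induction hypothesis
\[
b\equiv\varepsilon^{(r-k)(2n-3)}Qc^{-1}R(0)\prod_{l=1}^{k-1}R'(a_l)a_l^n\mod\varepsilon^{(r-k)(2n-3)+1}.
\]
By Taylor's formula, $P^{\circ(k+1)}=\sum_{j\ge 0}P^{[j]}(a)\,b^{j}$. I would then handle the three regimes $j=0$, $j=1$, $j\ge 2$ separately.

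For $j=0$, one evaluates $P(\varepsilon^{-2r}a_k)$ directly from \eqref{eq: polynomial P}: the factor $R(\varepsilon^{2r}x)$ at $x=\varepsilon^{-2r}a_k$ equals $R(a_k)=0$, killing the first summand, while the second summand $\varepsilon^{-2r}L(\varepsilon^{2r}x)$ evaluates to $\varepsilon^{-2r}L(a_k)=\varepsilon^{-2r}a_{k+1}$ by Lemma~\ref{lem: polynomial L}. This produces the desired leading term $\varepsilon^{-2r}a_{k+1}$.

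For $j=1$, Lemma~\ref{lem: base of induction} gives $P'(a)\equiv\varepsilon^{3-2n}R'(a_k)a_k^n\mod\varepsilon^{4-2n}$, and combining with the inductive expression for $b$ yields the leading term
\[
\varepsilon^{3-2n}R'(a_k)a_k^n\cdot\varepsilon^{(r-k)(2n-3)}Qc^{-1}R(0)\prod_{l=1}^{k-1}R'(a_l)a_l^n
=\varepsilon^{(r-k-1)(2n-3)}Qc^{-1}R(0)\prod_{l=1}^{k}R'(a_l)a_l^n,
\]
with all other contributions (from the error in $P'(a)$ or in $b$) of order at least $\varepsilon^{(r-k-1)(2n-3)+1}$, because $3-2n=-(2n-3)$. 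This is precisely the new ``$Q$-contribution'' predicted at level $k+1$.

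For $j\ge 2$, I would apply Lemma~\ref{lem: Pj} to bound $P^{[j]}(a)b^{j}=O\!\left(\varepsilon^{2r(j-1)-2n+3+j(r-k)(2n-3)}\right)$, and check the elementary inequality
\[
2r(j-1)+\bigl((j-1)(r-k)\bigr)(2n-3)\ge 1\qquad\text{for }j\ge 2,\ 1\le k\le r-1,
\]
which follows from $2r(j-1)\ge 2r\ge 4$, ensuring all these terms are absorbed in the error modulus $\varepsilon^{(r-k-1)(2n-3)+1}$. Adding up the three regimes completes the induction. Finally, specializing to $k=r$ (so $a_r=0$) and using $R(0)=\prod_{l=1}^{r-1}a_l$ together with the definition \eqref{eq: c-constant} of $c$, the product $R(0)\prod_{l=1}^{r-1}R'(a_l)a_l^n/c$ simplifies to $1$, yielding $P^{\circ r}\equiv Q\mod\varepsilon$ as in Proposition~\ref{prop: main proposition}.

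The main obstacle is the bookkeeping of $\varepsilon$-exponents, especially in the $j\ge 2$ regime where one must ensure the crude bound of Lemma~\ref{lem: Pj} combined with the $j$-th power of $b$ still lands inside the required error term; once the inequality above is verified, the rest reduces to the explicit algebraic identities between $R(0)$, $R'(a_l)$, and $c$.
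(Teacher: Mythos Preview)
Your proof is correct and follows essentially the same approach as the paper: induction on $k$ with base case Lemma~\ref{lem: main lemma base of induction}, Taylor expansion of $P^{\circ(k+1)}$ around $\varepsilon^{-2r}a_k$, and the same three-regime analysis ($j=0$ via $R(a_k)=0$ and $L(a_k)=a_{k+1}$; $j=1$ via Lemma~\ref{lem: base of induction}; $j\ge 2$ via Lemma~\ref{lem: Pj}). Your inequality for the $j\ge 2$ regime is exactly the paper's observation that $h(j)=2r(j-1)-2n+3+j(r-k)(2n-3)$ is strictly increasing with $h(2)\ge h(1)+1$, just rewritten; the only cosmetic difference is that the paper names the factor $S=\varepsilon^{-(r-k)(2n-3)}b$ explicitly, while you work directly with $b$.
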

\begin{proof}
We proceed by induction on $k$. The base $k=1$ is ensured by Lemma \ref{lem: main lemma base of induction}. So, we assume that the statement holds for $k$ and will show it for $k+1$. By the induction hypothesis, we can write
	\[
	P^{\circ k}(x)=\varepsilon^{-2r}a_k+\varepsilon^{(r-k)(2n-3)}\left [Q(x)c^{-1}R(0)\prod_{l=1}^{k-1}R'(a_l)a_l^n+\varepsilon T(\varepsilon,x) \right ]
	\]
	with $T(\varepsilon,x)\in\kk[\varepsilon,x]$. Let $S(\varepsilon,x)=Q(x)c^{-1}R(0)\prod_{l=1}^{k-1}R'(a_l)a_l^n+\varepsilon T(\varepsilon,x)$, then
	\begin{equation}\label{eq: Decomposition of Pk}
	P^{\circ k}(x)=\varepsilon^{-2r}a_k+\varepsilon^{(r-k)(2n-3)}S(\varepsilon,x),
	\end{equation}
	where 
	\[
	S\equiv Q(x)c^{-1}R(0)\prod_{l=1}^{k-1}R'(a_l)a_l^n\mod\varepsilon.
	\]
	By Taylor's formula from Proposition \ref{prop: Hasse derivative} applied to \eqref{eq: Decomposition of Pk}, we get
	\begin{equation}\label{eq: P^k+1}
	P^{\circ (k+1)}(x)=P(\varepsilon^{-2r}a_k+\varepsilon^{(r-k)(2n-3)}S)=\sum_{j\geq 0}^{}P^{[j]}(\varepsilon^{-2r}a_k)\varepsilon^{j(r-k)(2n-3)}S^j.
	\end{equation}
	By Lemma \ref{lem: Pj}, we have $P^{[j]}(\varepsilon^{-2r}a_k)=O(\varepsilon^{2r(j-1)-2n+3})$, while $S^j\equiv U(x)\mod\varepsilon$ for some $U(x)\in\kk[x]$. Therefore,
	\[
	P^{[j]}(\varepsilon^{-2r}a_k)\varepsilon^{j(r-k)(2n-3)}S^j=O\left (\varepsilon^{2r(j-1)-2n+3+j(r-k)(2n-3)} \right ).
	\] 
	Consider the function
	\[
	h\colon \NN\to\ZZ,\ \ j\mapsto 2r(j-1)-2n+3+j(r-k)(2n-3). 
	\]
	Note that $h$ is a strictly increasing affine function, since $2r+(r-k)(2n-3)>0$. Thus $h(1)+1\leq h(j)$ for all $j\geq 2$, which gives
	\[
	(r-k)(2n-3)-2n+4=(r-(k+1))(2n-3)+1\leq h(j).
	\]
	We conclude that 
	\[
	\sum_{j\geq 2}^{}P^{[j]}(\varepsilon^{-2r}a_k)\varepsilon^{j(r-k)(2n-3)}S^j=O\left(\varepsilon^{(r-(k+1))(2n-3)+1} \right),
	\]
	and thus formula \eqref{eq: P^k+1} implies
	\begin{equation}\label{eq: two terms}
	P^{\circ(k+1)}(x)\equiv P(\varepsilon^{-2r}a_k)+P^{[1]}(\varepsilon^{-2r}a_k)\varepsilon^{(r-k)(2n-3)}S\mod \varepsilon^{(r-(k+1))(2n-3)+1}.
	\end{equation}
	To achieve the proof, it remains to show that the right hand side of \eqref{eq: two terms} is equivalent, modulo $\varepsilon^{(r-(k+1))(2n-3)+1}$, to
	\[
	\varepsilon^{-2r}a_{k+1}+\varepsilon^{(r-(k+1))(2n-3)}Qc^{-1}R(0)\prod_{l=1}^{k}R'(a_l)a_l^n.
	\]
	But $P(\varepsilon^{-2r}a_k)=\varepsilon^{-2r}a_{k+1}$ by Lemma \ref{lem: polynomial L}. On the other hand,
	\[
	P^{[1]}(\varepsilon^{-2r}a_k)\varepsilon^{(r-k)(2n-3)}S\equiv \varepsilon^{(r-(k+1))(2n-3)}Qc^{-1}R(0)\prod_{l=1}^{k}R'(a_l)a_l^n \mod \varepsilon^{(r-(k+1))(2n-3)+1}.
	\]
	Indeed, this is equivalent to
	\[
	P^{[1]}(\varepsilon^{-2r}a_k)S\equiv \varepsilon^{3-2n}Qc^{-1}R(0)\prod_{l=1}^{k}R'(a_l)a_l^n \mod \varepsilon^{4-2n},
	\]
	which immediately follows from Lemma \ref{lem: base of induction} and the definition of $S$.
\end{proof}

We are now in position to prove Proposition \ref{prop: main proposition}.

\begin{proof}[Proof of Proposition \ref{prop: main proposition}]
	Put $k=r$ in the formula \eqref{eq: iterations}. Recall that $a_r=0$, hence
	\[
	P^{\circ r}\equiv Qc^{-1}R(0)\prod_{l=1}^{r-1}R'(a_l)a_l^n\mod\varepsilon.
	\]
	It remains to notice that, by the definitions \eqref{eq: polynomial R} and \eqref{eq: c-constant} of $R$ and $c$, one has
	\[
	c^{-1}R(0)\prod_{l=1}^{r-1}R'(a_l)a_l^n=\frac{1}{R(0)^n}\prod_{l=1}^{r-1}a_l^n=1.  \qedhere
	\]
\end{proof}

\subsection{Proof of the main result}

Let $\wordmap\colon\End(\AA_\kk^1)\to\End(\AA_\kk^1)$ be the $r$-th iterate map defined by the word $w=x^r$, and let $\mathcal{W}=\wordmap(\End(\AA_\kk^1))$ be its image. We will show that $\mathcal{W}$ is dense in $\End(\AA_\kk^1)$ in the Zariski and Euclidean ind-topologies. So, let $Q\in\End(\AA_\kk^1)$ be any polynomial and $\mathcal{U}\ni Q$ be any of its open neighbourhoods. We then need to show that $\mathcal{U}\cap\mathcal{W}\ne\varnothing$. Put $d=\max\{1,\deg Q\}+r$. Then it is sufficient to show that $\mathcal{U}\cap\mathcal{E}_{\leq d^r}$ contains an element of $\mathcal{W}$. In what follows, $\mathcal{E}_{\leq d^r}$ is identified with the vector space $\kk^N$, where $N=d^r+1$. By Proposition \ref{prop: main proposition}, there exists $P_{\varepsilon}\in\kk[\varepsilon,\varepsilon^{-1}][x]$ such that $Q=P_{\varepsilon}^{\circ r}+\varepsilon T(\varepsilon,x)$, where $T(\varepsilon,x)\in\kk[\varepsilon,x]$.
	
	Suppose that $(\kk,|\cdot|)$ is a valued field with a non-trivial absolute value. Let us first notice that $\kk$ contains elements which are arbitrarily close to zero. More precisely, $|\kk^\times|=\{|\varepsilon|\colon\varepsilon\in\kk^\times \}$ is a subgroup of $\RR_{>0}$, which is either dense (if $1$ is its accumulation point), or is of the form $\{t^\ZZ\}$ for some $t\in (0,1)$. Now, the set $\mathcal{U}\cap\mathcal{E}_{\leq d^r}$ is open in $\mathcal{E}_{\leq d^r}$, hence there are $\delta\in\RR_{>0}$ and an open $\delta$-ball $B_\delta(Q)\subset \mathcal{U}\cap\mathcal{E}_{\leq d^r}$ centred at $Q$. Then, by choosing $\varepsilon\in\kk^{\times}$ with $|\varepsilon|\cdot\|T\|<\delta$, we obtain $\|Q-P_\varepsilon^{\circ r}\|=|\varepsilon|\cdot\|T\|<\delta$, hence $P_{\varepsilon}^{\circ r}\in B_\delta(Q)\subset\mathcal{U}\cap \mathcal{E}_{\leq d^r}\cap\mathcal{W}$, as required.
	
	In the case of an arbitrary infinite base field $\kk$, we proceed as follows. Writing $\mathcal{E}_{\leq d^r}\setminus (\mathcal{U}\cap\mathcal{E}_{\leq d^r})$ as the zero set of the ideal $(F_1,\ldots,F_m)$, we notice that $F_i(Q)\ne 0$ for some $i\in\{1,\ldots,m\}$; here we identify $Q$ with a point in $\kk^N$. Consider $G(\varepsilon)=F_i( P_\varepsilon^{\circ r})=F_i(Q-\varepsilon T)$ as a polynomial of $\varepsilon$. Note that $G$ is not a zero polynomial, as $G(0)=F_i(Q)\ne 0$. Since $\kk$ is infinite, there exists $\varepsilon\in\kk^\times$ such that $G(\varepsilon)=F_i(P_\varepsilon^{\circ r})\ne 0$, thus $P_{\varepsilon}^{\circ r}\in \mathcal{U}\cap\mathcal{E}_{\leq d^r}$. This achieves the proof.

\def\bibindent{2.5em}

\bibliographystyle{alphadin}
\bibliography{biblio}

\begin{thebibliography}{BGK14}


\providecommand{\url}[1]{\texttt{#1}}
\expandafter\ifx\csname urlstyle\endcsname\relax
  \providecommand{\doi}[1]{doi: #1}\else
  \providecommand{\doi}{doi: \begingroup \urlstyle{rm}\Url}\fi

\bibitem[AW45]{ArtinWhaples}
\textsc{Artin}, E. ; \textsc{Whaples}, G.:
\newblock Axiomatic characterization of fields by the product formula for
  valuations.
\newblock {In: }\emph{Bull. Am. Math. Soc.} 51 (1945), S. 469--492.
\newblock \url{http://dx.doi.org/10.1090/S0002-9904-1945-08383-9}. --
\newblock DOI 10.1090/S0002--9904--1945--08383--9. --
\newblock ISSN 0002--9904

\bibitem[BGK14]{BandmanGarionKunyavskii}
\textsc{Bandman}, Tatiana ; \textsc{Garion}, Shelly  ;
  \textsc{Kunyavski{\u{\i}}}, Boris:
\newblock Equations in simple matrix groups: algebra, geometry, arithmetic,
  dynamics.
\newblock {In: }\emph{Cent. Eur. J. Math.} 12 (2014), Nr. 2, S. 175--211.
\newblock \url{http://dx.doi.org/10.2478/s11533-013-0335-4}. --
\newblock DOI 10.2478/s11533--013--0335--4. --
\newblock ISSN 1895--1074

\bibitem[Bor83]{Borel}
\textsc{Borel}, Armand:
\newblock On free subgroups of semi-simple groups.
\newblock {In: }\emph{Enseign. Math. (2)} 29 (1983), S. 151--164. --
\newblock ISSN 0013--8584

\bibitem[GKP18]{GordeevKunyavskiiPlotkin2018}
\textsc{Gordeev}, N.~L. ; \textsc{Kunyavski\u{\i}}, B.~\.  ; \textsc{Plotkin},
  E.~B.:
\newblock Geometry of word equations in simple algebraic groups over special
  fields.
\newblock {In: }\emph{Uspekhi Mat. Nauk} 73 (2018), Nr. 5(443), 3--52.
\newblock \url{http://dx.doi.org/10.4213/rm9838}. --
\newblock DOI 10.4213/rm9838. --
\newblock ISSN 0042--1316,2305--2872

\bibitem[Gol03]{Goldschmidt}
\textsc{Goldschmidt}, David~M.:
\newblock \emph{Grad. Texts Math.}. Bd. 215: {\emph{Algebraic functions and
  projective curves}}.
\newblock New York, NY: Springer, 2003.
\newblock \url{http://dx.doi.org/10.1007/b97844}.
\newblock \url{http://dx.doi.org/10.1007/b97844}. --
\newblock ISBN 0--387--95432--5

\bibitem[HM78]{HofmannMukherjea}
\textsc{Hofmann}, Karl~H. ; \textsc{Mukherjea}, Arunava:
\newblock On the density of the image of the exponential function.
\newblock {In: }\emph{Math. Ann.} 234 (1978), 263--273.
\newblock \url{http://dx.doi.org/10.1007/BF01420648}. --
\newblock DOI 10.1007/BF01420648. --
\newblock ISSN 0025--5831

\bibitem[Ser10]{SerreBourbakiCremona}
\textsc{Serre}, Jean-Pierre:
\newblock The {Cremona} group and its finite subgroups.
\newblock {In: }\emph{S\'eminaire Bourbaki. Volume 2008/2009. Expos\'es
  997--1011}.
\newblock Paris: Soci{\'e}t{\'e} Math{\'e}matique de France (SMF), 2010. --
\newblock ISBN 978--2--85629--291--4, S. 75--100, ex

\bibitem[Sha66]{ShafarevichIndGroups}
\textsc{Shafarevich}, I.~R.:
\newblock \emph{On some infinite dimensional groups}.
\newblock Simpos. {Int}. {Geom}. {Algebr}., {Roma} 1965, 208-212 (1967);
  {Rend}. {Mat}. {Appl}., {V}. {Ser}. 25, 208-212 (1966)., 1966

\bibitem[Sha13]{Shalev}
\textsc{Shalev}, Aner:
\newblock Some results and problems in the theory of word maps.
\newblock \,Version:\,2013.
\newblock \url{http://dx.doi.org/10.1007/978-3-642-39286-3\_22}.
\newblock {In: }\emph{Erd\"{o}s centennial. On the occasion of Paul Erd\"{o}s
  100th anniversary of his birth}.
\newblock Berlin: Springer; Budapest: J{\'a}nos Bolyai Mathematical Society,
  2013. --
\newblock DOI 10.1007/978--3--642--39286--3\_22. --
\newblock ISBN 978--3--642--39285--6; 978--3--642--39286--3, S. 611--649

\end{thebibliography}

\address{IMB, Universit\'{e} de Bordeaux, 351 Cours de la Lib\'{e}ration, 33405 Talence Cedex, France}
\email{pascal.autissier@math.u-bordeaux.fr}
\email{jean-philippe.furter@math.u-bordeaux.fr}
\email{egor.yasinsky@u-bordeaux.fr}

\end{document}